\documentclass[a4paper,11pt,reqno]{amsart}

\usepackage[utf8]{inputenc}
\usepackage{amsmath,amssymb,amsthm,mathtools}
\usepackage[shortlabels]{enumitem}
\usepackage[left=3cm,right=3cm]{geometry}
\usepackage{hyperref}
\usepackage{xcolor}
\usepackage{microtype}
\usepackage{alltt}
\usepackage{stmaryrd}

\DeclareUnicodeCharacter{2016}{\textnormal{\ensuremath{\Vert}}}
\DeclareUnicodeCharacter{2102}{\textnormal{\ensuremath{\mathbb{C}}}}
\DeclareUnicodeCharacter{211D}{\textnormal{\ensuremath{\mathbb{R}}}}
\DeclareUnicodeCharacter{2115}{\textnormal{\ensuremath{\mathbb{N}}}}
\DeclareUnicodeCharacter{2192}{\textnormal{\ensuremath{\rightarrow}}}
\DeclareUnicodeCharacter{21A6}{\textnormal{\ensuremath{\mapsto}}}
\DeclareUnicodeCharacter{2200}{\textnormal{\ensuremath{\forall}}}
\DeclareUnicodeCharacter{2203}{\textnormal{\ensuremath{\exists}}}
\DeclareUnicodeCharacter{221E}{\textnormal{\ensuremath{\infty}}}
\DeclareUnicodeCharacter{2227}{\textnormal{\ensuremath{\wedge}}}
\DeclareUnicodeCharacter{2264}{\textnormal{\ensuremath{\leq}}}
\DeclareUnicodeCharacter{2265}{\textnormal{\ensuremath{\geq}}}
\DeclareUnicodeCharacter{2A05}{\textnormal{\ensuremath{\bigsqcap}}}

\newenvironment{leancode}%
  {\par\addvspace{6pt}\small
   \setlength{\topsep}{0pt}\setlength{\partopsep}{0pt}\begin{alltt}}%
  {\end{alltt}\addvspace{6pt}}
\newcommand{\lbr}{\symbol{123}}
\newcommand{\rbr}{\symbol{125}}
\newcommand{\lean}[1]{\texttt{\small #1}}

\numberwithin{equation}{section}

\hypersetup{colorlinks=true,linkcolor=blue,citecolor=red,urlcolor=black}

\theoremstyle{plain}
\newtheorem{theorem}{Theorem}[section]

\newtheorem{lemma}[theorem]{Lemma}
\newtheorem{corollary}[theorem]{Corollary}

\theoremstyle{definition}

\theoremstyle{remark}

\newcommand{\C}{\mathbb{C}}
\newcommand{\R}{\mathbb{R}}
\newcommand{\abs}[1]{\lvert #1\rvert}
\newcommand{\norm}[1]{\lVert #1\rVert}
\newcommand{\dd}{\,\mathrm{d}}
\newcommand{\ee}{\mathrm{e}}
\newcommand{\ii}{\mathrm{i}}
\newcommand{\Fock}{\mathcal{F}^2(\C)}
\newcommand{\dga}{\mathrm{d}\gamma}
\newcommand{\dgak}[1]{\mathrm{d}\gamma_{#1}}

\title[Complex analytic proof of square Fock SPR]{A complex-analytic proof of square-restricted stable phase retrieval in Fock space}
\author{Cynthia Bortolotto}
\author{Jo\~ao P.~G.~Ramos}

\begin{document}

\begin{abstract}
We give a short complex-analytic proof of a square-restricted form of local stable phase retrieval at the Gaussian in one-dimensional Fock space. The main estimate is a coercivity inequality for the map $F\mapsto F^2$:
\[
 \norm{F^2-F(0)^2}_{\Fock}
 \lesssim \inf_{c\in\R}\norm{\abs{F}^2-c}_{\Fock}.
\]
The proof uses a weighted derivative norm, two integrations by parts, and a weighted Cauchy inequality. The proof has been completely verified in Lean with the aid of Large Language Models. 
\end{abstract}

\maketitle

\section{Introduction}

Given a window function $g \in L^2(\R),$ we define the Short-time Fourier transform (STFT) of another $f \in L^2(\R)$ as 
\[
 V_g f(x,\omega)=\int_{\R} f(t)\overline{g(t-x)}\ee^{-2\pi\ii t\omega}\dd t.
\]
This transform is important in several different contexts, from signal processing to x-ray imaging \cite{Folland1989,Groechenig2001,Pfeiffer2018,Rodenburg2008}, and it is the main object of study of this paper, as well as one of the cornerstone objects of modern time-frequency analysis. As one major mathematical property of such object, we highlight that it is an isometry up to the norm of the window,
\[
 \norm{V_gf}_{L^2(\R^2)}=\norm{g}_{L^2(\R)}\norm{f}_{L^2(\R)},
\]
which allows such operators to retain many of the metric properties of $L^2(\R)$, with the major advantage that one often \emph{gains regularity} when applying it \cite[Chapters~3 and~11]{Groechenig2001}.  

We shall be interested in the case of a specific window $g$, which perhaps best exemplifies this: if we take $g(t) = 2^{1/4} \ee^{-\pi t^2}$ to be the $L^2$-normalized Gaussian on $\R,$ the transform $V_gf$ is called the \emph{Gabor transform}, and it is intimately connected to the theory of \emph{entire functions} on the plane. Indeed, one sees that the Gabor transform gives rise naturally to the so-called Bargmann transform
\[
 \mathcal{B}f(z):=2^{1/4}\int_{\R}f(t)\,
 \ee^{2\pi tz-\pi t^2-\frac{\pi}{2}z^2}\dd t,
 \qquad z\in\C,
\]
which satisfies the pointwise identity
\[
 \abs{V_gf(x,-\omega)}
 =\ee^{-\frac{\pi}{2}\abs{z}^2}\abs{\mathcal{B}f(z)},
 \qquad z=x+\ii\omega;
\]
see \cite{Bargmann1961} and \cite[Section~3.4]{Groechenig2001}, whose conventions we follow. This operator, as it turns out, is an isometric isomorphism between $L^2(\R)$ and the Bargmann-Fock space of entire functions on $\C$, which is nothing but the space of entire $F$ with $\int_{\C}\abs{F(z)}^2\ee^{-\pi\abs z^2}\dd m(z)<\infty$, where $m$ denotes Lebesgue measure on $\C$. Throughout this paper we work with the equivalent normalization
\[
 \dga(z):=\frac{1}{\pi}\ee^{-\abs{z}^2}\dd m(z),
 \qquad
 \norm{H}_{\Fock}^2:=\int_{\C}\abs{H(z)}^2\dga(z),
\]
$\Fock$ denoting the space of entire functions of finite norm; the substitution $H(w):=F(w/\sqrt{\pi})$ is a unitary map from the space of the previous display onto $\Fock$. The normalized monomials $e_n(z)=z^n/\sqrt{n!}$ form an orthonormal basis of $\Fock$, so if $H(z)=\sum_{n\geq0}a_nz^n$, then
\[
 \norm{H}_{\Fock}^2=\sum_{n\geq0}\abs{a_n}^2n!.
\]
We refer to \cite{Zhu2012} for a systematic treatment of Fock spaces.

Such a window has, beyond its interesting analyticity properties, the practical advantage of yielding a lot of spatial concentration, due to the rapid decay of the Gaussian. In that regard, it is the perhaps most natural object in order to investigate one of the main mathematical problems related to the applied theory of the Short-time Fourier transform, which is that of phase retrieval. Phase retrieval asks whether an unknown function can be recovered, up to a constant phase, from the modulus of its STFT; equivalently, the inverse problem is to reconstruct $f$ from the phaseless measurement $\abs{V_gf}$, or from the intensity $\abs{V_gf}^2$. This problem is central in time--frequency analysis and also arises in ptychography, where an unknown object is probed by translates of a window and only far-field intensities are recorded.

If stability is ignored, phase retrieval can often be understood through the ambiguity function. With
\[
 \mathcal{A}f(x,\omega):=\ee^{\pi\ii x\omega}V_ff(x,\omega),
\]
one has the fundamental identity
\[
 \widehat{\abs{V_gf}^2}(\omega,-x)
 =\mathcal{A}f(x,\omega)\overline{\mathcal{A}g(x,\omega)}.
\]
For a Gaussian window, $\mathcal{A}g$ is a nonvanishing Gaussian, and this identity gives phase retrieval. For Hermite windows, the ambiguity function is a Gaussian times a Laguerre polynomial and therefore has zeros on finitely many circles. These zeros do not prevent phase retrieval, but they do make quantitative stability more delicate, since a direct inversion would require division by a function that degenerates along those circles. See \cite{GroechenigJamingMalinnikova2020} for a systematic study of zeros of ambiguity functions and STFTs.

On the complex-analytic side, the Hermite windows correspond to the scale of true polyanalytic Fock spaces, of which $\Fock$ is the first member; see, for example, \cite{AbreuFeichtinger2014}. For $k\geq0$, the $k$th space is generated by the true Hermite basis
\[
 \Phi_{k,n}(z)=\frac{1}{\sqrt{k!n!}}
 \sum_{j=0}^{\min(k,n)}(-1)^j\binom{k}{j}\frac{n!}{(n-j)!}
 z^{n-j}\overline z^{\,k-j},
 \qquad n\geq0.
\]
The case $k=0$ is exactly $\Fock$, whereas $k\geq1$ gives a genuinely polyanalytic space. The companion stability theorem discussed below treats these Hermite settings at canonical finite combinations of basis vectors. The present paper stays in the analytic case $k=0$, where the Cauchy--Riemann equations provide an additional identity that is unavailable in the polyanalytic setting.

The fundamental stability question is whether the distance to the global-phase orbit can be controlled by the distance between phaseless measurements. In the natural Hilbert-space formulation, one seeks an estimate of the form
\[
 \inf_{\abs{\lambda}=1}\norm{f-\lambda h}_{L^2(\R)}
 \leq C(f)\norm{\abs{V_g f}-\abs{V_g h}}_{L^2(\R^2)}.
\]

More generally, if $E$ is a closed subspace of some $L^2(\Omega,\mu)$, one can ask whether the modulus map is locally stable at $f\in E$:
\begin{equation}
 \inf_{\abs{\lambda}=1}\norm{f-\lambda h}_{L^2(\mu)}
 \leq C(f)\norm{\abs f-\abs h}_{L^2(\mu)},
 \qquad h\in E.
 \label{eq:abstract-stability-intro}
\end{equation}
This formulation serves as evidence as to why infinite-dimensional stability is subtle. Uniform Lipschitz bounds fail for phase retrieval by continuous frames in infinite-dimensional spaces \cite{CahillCasazzaDaubechies2016,AlaifariGrohs2017}, and even Gabor phase retrieval is severely ill-posed in its unrestricted form \cite{AlaifariGrohs2021}. More recently, it was shown that the local stability constant is infinite on a dense set in broad classes of phase-retrieval spaces \cite{AlharbiEtAl2024}.

These negative results, however, do not end the stability question, as they merely indicate that one must identify the geometry of the instability and the points at which it can be excluded. For Gaussian-window Gabor measurements, this led to quantitative estimates in which the stability constant is governed by connectivity or Poincar\'e-type quantities of the spectrogram \cite{AlaifariDaubechiesGrohsYin2019,GrohsRathmair2019,GrohsRathmair2022,Rathmair2024}. Such estimates make precise the principle that instability is caused by separated components or by mass escaping to infinity. They generally use a stronger first-order Sobolev measurement norm rather than the natural $L^2$ norm. Thus two basic issues remain visible: robustness under a change of window and stability in the topology in which the forward STFT map is naturally bounded.

In the Fock model the difficulty is purely metric: the modulus of an entire function determines it up to a unimodular constant, so uniqueness is immediate and only the quantitative question is left, and that question is genuinely infinite dimensional. The preceding literature provides powerful conditional stability estimates, but it does not by itself give local stability in the unmodified $L^2$ measurement norm at a concrete nonzero signal. The companion paper \cite{Bertolini2026}  is one of the first instances where a positive result which makes use of the full geometry was shown. Indeed, it proves the global estimate
\begin{equation}
 \inf_{\abs{\lambda}=1}\norm{F-\lambda}_{\Fock}
 \leq M\norm{\abs{F}-1}_{\Fock},
 \qquad F\in\Fock,
 \label{eq:full-result-intro}
\end{equation}
at the constant function. More generally, it establishes local stability for the canonical Hermite windows at elements of the finite span of their basis vectors. Its proof proceeds through an orthogonal reduction and a quantitative estimate for trigonometric polynomials on circles, followed by frequency localization on annuli; see \cite{Bertolini2026}.

The point of the present manuscript is to go a different way, providing a different argument in a particular context. It is shorter and uses only complex analysis, but it applies to a smaller, nonlinear but still infinite-dimensional class of competitors, namely the class of squares
\[
 \mathcal{S}:=\{F^2:F\text{ is entire and }F^2\in\Fock\}.
\]
Our main estimate is the following.

\begin{theorem}\label{thm:main}
There is an absolute constant $C>0$ such that, for every polynomial $F$,
\begin{equation}
 \norm{F^2-F(0)^2}_{\Fock}
 \leq C\inf_{c\in\R}\norm{\abs{F}^2-c}_{\Fock}.
 \label{eq:square-coercivity}
\end{equation}
The same estimate holds for every entire $F$ such that $F^2\in\Fock$.
\end{theorem}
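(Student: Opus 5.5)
The plan is to work with $G:=F^2-F(0)^2$ and $u:=\abs{F}^2-c$, with $c\in\R$ fixed. It suffices to take $F$ a polynomial and pass to general entire $F$ with $F^2\in\Fock$ at the end by approximation: truncate the Taylor series of $F^2$, or use dilations $F_r(z)=F(rz)$ with $r\uparrow1$, and apply Fatou on the left. For the optimal $c$ one has $c=\int\abs{F}^2\dga$, so $\int u\dga=0$. The point is to express $\norm{G}_{\Fock}$ through $G'=2FF'$, because $\abs{G'}^2=4\abs{F}^2\abs{F'}^2$ and $\abs{F'}^2=\partial\bar\partial\abs{F}^2=\partial\bar\partial u$. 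This links $G'$ directly to the measurement $u$.

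\emph{Step 1 (weighted derivative norm).} For $H(z)=\sum a_nz^n$ with $H(0)=0$, I would show
\[
 \norm{H}_{\Fock}^2\asymp\int_{\C}\abs{H'(z)}^2\,w(z)\dga(z),\qquad w(z)=(1+\abs z^2)^{-1}.
\]
The integral is diagonal in the monomials. Since $\int\abs z^{2k}w\dga\asymp k!/(k+1)$, the right side is $\asymp\sum_{n\geq1}\abs{a_n}^2n^2(n-1)!/n=\sum\abs{a_n}^2n!$. I might instead pick a slightly different radial weight, chosen so that $\partial\bar\partial(w\ee^{-\abs z^2})$ is explicit. Applying this with $H=G$ gives
\[
 \norm{G}^2\lesssim\int\abs F^2\abs{F'}^2w\dga=\int u\,\partial\bar\partial u\;w\dga+c\int\partial\bar\partial u\;w\dga .
\]

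\emph{Step 2 (the $u\,\partial\bar\partial u$ term: two integrations by parts plus weighted Cauchy).} Write $\rho=w\ee^{-\abs z^2}/\pi$ and integrate by parts once:
\[
 \int u\,\partial\bar\partial u\,\rho=-\int\abs{\partial u}^2\rho-\int u\,\partial u\,\bar\partial\rho .
\]
Boundary terms vanish for polynomials, since the Gaussian dominates. Since $\abs{\bar\partial\rho}\lesssim\abs z\rho$, the weighted Cauchy inequality bounds the last term by $\int\abs{\partial u}^2\rho+C\int u^2\abs z^2\rho$. The first of these is absorbed by the good negative term. The second is $\lesssim\int u^2\dga$, because $\abs z^2w\leq1$. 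So this piece is $\lesssim\norm{u}^2$.

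\emph{Step 3 (the $c$ term; main obstacle).} The remaining term is $c\int\abs{F'}^2w\dga$. Integrating by parts twice turns it into $c\int u\,\partial\bar\partial\rho$. A naive bound $c\norm u$ is not acceptable. For $F=a+h$ with $h$ small, $c\approx a^2$ and $\norm u\approx a\norm h$, so the right scaling is $c\norm{F'}^2_w\lesssim\norm u^2$, not $c\norm u$. This is where I expect the real work to be.

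My first attempt would exploit the freedom in the weight. I would choose $\rho$ so that $\partial\bar\partial\rho=\kappa\,\ee^{-\abs z^2}/\pi+R$ with $\kappa$ constant. The $\kappa$ part then drops because $\int u\dga=0$ for the optimal $c$. For the remainder $R$, I would use the radial structure: its pairing with $u$ only sees the radial average $\sum_n\abs{a_n}^2\abs z^{2n}-c$ of $u$. That average should be controlled quadratically, comparing $\sum\abs{a_n}^2n!\,(\text{moments of }R)$ with $\norm u^2$ via Cauchy--Schwarz in the radial variable.

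If that fails, the fallback is a direct coercivity estimate $c\int\abs{F'}^2w\dga\lesssim\norm u^2+\int\abs F^2\abs{F'}^2w\dga$ with a small constant in front of the last term, so that it can be absorbed into the left side of Step 1. I would obtain it from expanding $u=2\,\mathrm{Re}(\overline{F(0)}(F-F(0)))+\abs{F-F(0)}^2$ and orthogonality of the monomials. Combining Steps 1--3 gives $\norm{G}\leq C\norm u$ for every $c$ after optimizing, which is \eqref{eq:square-coercivity}.
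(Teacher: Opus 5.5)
There is a genuine gap: your Steps 1 and 2 are correct, but they reduce the theorem to itself, and Step 3 is never proved.

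\textbf{Step 3 is the whole theorem.} Write $\norm{\cdot}$ for the $L^2(\dga)$ norm and take the optimal $c=\norm{F}^2$. Since $\int H\,\dga=H(0)$ for $H\in\Fock$, one has $\norm{G}^2=\norm{F^2}^2-\abs{F(0)}^4$ and $\norm{u}^2=\norm{F^2}^2-\norm{F}^4$. Step 1 applied to $F-F(0)$ gives
\[
 c\int\abs{F'}^2w\,\dga\asymp\norm{F}^2\bigl(\norm{F}^2-\abs{F(0)}^2\bigr)\asymp\norm{F}^4-\abs{F(0)}^4=\norm{G}^2-\norm{u}^2 .
\]
So the bound $c\int\abs{F'}^2w\,\dga\lesssim\norm{u}^2$ is, up to constants, exactly $\norm{G}\lesssim\norm{u}$. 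You can see the cause in Step 2. Since $\abs{\partial u}^2=\abs{F}^2\abs{F'}^2$, the integration by parts returns minus the quantity you are estimating, so the $u$-part has only been traded for the $c$-part.

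\textbf{Neither route closes Step 3.} In your first route, $\int u\,\dga=0$ makes subtracting $\kappa\ee^{-\abs z^2}/\pi$ vacuous. Hence $c\int uR\dd m$ is again $c\int\abs{F'}^2\rho\dd m=c\sum_{n\geq1}\abs{a_n}^2n^2\int\abs z^{2n-2}\rho\dd m$, which is circular. Bounding it by Cauchy--Schwarz against the radial average $\bar u$ loses too much. The reason is that $\bar u$ does not see the term $2\mathrm{Re}(\overline{F(0)}h)$, with $h=F-F(0)$, which carries $\norm{u}^2$ near a constant. For $F=1+2^{-n}z^n/\sqrt{n!}$ the ratio $c\norm{\bar u}/\norm{u}^2$ grows like $2^{n}n^{-1/4}$. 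In your fallback, orthogonality gives
\[
 \norm{u}^2=2\abs{F(0)}^2\norm{h}^2+\norm{h^2}^2-\norm{h}^4+4\mathrm{Re}\bigl(\overline{F(0)}\langle h^2,h\rangle\bigr).
\]
The cubic term has no sign. Cauchy--Schwarz bounds it only by $4\abs{F(0)}\norm{h}\norm{h^2}$, and the positive terms do not dominate this; take $\abs{F(0)}\norm{h}=\norm{h^2}$.

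\textbf{The missing idea} is not to split $\abs{F}^2=u+c$ at all. Let $w_k=\ee^{-\abs z^2}/(\pi(1+\abs z^2)^k)$, so $w_1=\rho$. The paper integrates by parts in $\int\abs{\nabla u}^2w_1\dd m$, which controls $\norm{G}^2$ by Step 1 because $\abs{\nabla u}^2=\abs{(F^2)'}^2$. This gives $-\int u\,\Delta u\,w_1\dd m+\frac12\int u^2\Delta w_1\dd m$, and now $c$ enters only through $u$, since $\Delta u=4\abs{F'}^2$. The first term is at most $\norm{u}J^{1/2}$ with $J=16\int\abs{F'}^4\dgak{2}$.

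$J$ satisfies a self-improving bound. Two more integrations by parts give $J=\int u\,\Delta\bigl(\Delta\abs{F}^2\,w_2\bigr)\dd m$, which produces the terms $\Delta^2\abs F^2=16\abs{F''}^2$ and $\abs{\nabla\Delta\abs F^2}=8\abs{F'}\abs{F''}$. The weighted Cauchy estimate $\int\abs{F''}^4\dgak{4}\lesssim\int\abs{F'}^4\dgak{2}$ (Cauchy's formula with radius $\abs z^{-1}$ for large $\abs z$) then yields $J\lesssim\norm{u}J^{1/2}$, hence $J\lesssim\norm{u}^2$. This quartic, second-order estimate, which bounds the pairing of $u$ with $\abs{F'}^2$ in absolute value, is what your plan lacks.

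\textbf{The approximation step also needs repair.} You need polynomials whose squares converge in $\Fock$. Truncations of $F^2$ are not squares, and $F_r$ is not a polynomial. The paper instead truncates the Taylor series of $F_r$ and uses convergence in $L^4(\dga)$.
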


The theorem immediately gives a square-restricted version of \eqref{eq:full-result-intro}.

\begin{corollary}\label{cor:square-stability}
There is an absolute constant $C>0$ such that, for every entire $F$ with $F^2\in\Fock$,
\begin{equation}
 \inf_{\abs{\lambda}=1}\norm{F^2-\lambda}_{\Fock}
 \leq C\norm{\abs{F}^2-1}_{\Fock}.
 \label{eq:square-stability}
\end{equation}
\end{corollary}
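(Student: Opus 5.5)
Corollary~\ref{cor:square-stability} will follow from Theorem~\ref{thm:main} by comparing $c$ with $1$ and $F(0)^2$ with a unimodular constant. Write $G:=F^2\in\Fock$, $a:=G(0)=F(0)^2$ and $\varepsilon:=\norm{\abs{F}^2-1}_{\Fock}$.

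\textbf{Step 1.} Take $c=1$ in \eqref{eq:square-coercivity}. This gives
\[
 \norm{G-a}_{\Fock}\le C\varepsilon .
\]

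\textbf{Step 2: bound $\abs{a}$.} Since $\abs{G}=\abs{F}^2$, we have $\norm{\abs{G}-1}_{\Fock}=\varepsilon$. Both $G-a$ and the constant $1$ lie in $\Fock$, so the reverse triangle inequality pointwise gives
\[
 \bigl|\abs{a}-1\bigr|
 \le \bigl|\abs{a}-\abs{G}\bigr|+\bigl|\abs{G}-1\bigr|
 \le \abs{G-a}+\bigl|\abs{G}-1\bigr| .
\]
Taking $\Fock$ norms, and using that $\gamma$ is a probability measure so that constants have norm equal to their modulus, we get
\[
 \bigl|\abs{a}-1\bigr|\le C\varepsilon+\varepsilon .
\]

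\textbf{Step 3: choose the phase.} If $a\neq0$, set $\lambda:=a/\abs{a}$. If $a=0$, let $\lambda$ be any unimodular number; then $\abs{\lambda-a}=1=\bigl|\abs{a}-1\bigr|$. In both cases $\abs{\lambda-a}=\bigl|\abs{a}-1\bigr|$. Therefore
\[
 \norm{G-\lambda}_{\Fock}\le\norm{G-a}_{\Fock}+\abs{a-\lambda}\le(2C+1)\varepsilon ,
\]
which is \eqref{eq:square-stability} with constant $2C+1$.

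There is no real obstacle: Step 2 is the only step that needs care. It uses that $\gamma$ has total mass $1$, that all three functions involved lie in $L^2(\gamma)$, and that $\abs{G}-1$ is simply the real function $\abs{F}^2-1$ measured in the same weighted $L^2$ norm.
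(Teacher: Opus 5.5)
Your proposal is correct and follows the paper's proof essentially step for step. Like the paper, you apply Theorem~\ref{thm:main} with $c=1$, bound $\bigl|\abs{F(0)}^2-1\bigr|$ by the pointwise inequality $\bigl|\abs{F}^2-\abs{F(0)}^2\bigr|\leq\abs{F^2-F(0)^2}$ together with $\gamma$ being a probability measure, and choose $\lambda=F(0)^2/\abs{F(0)}^2$ (or any unimodular $\lambda$ if $F(0)=0$), arriving at the same constant $2C+1$.
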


The proof of Theorem~\ref{thm:main} has three ingredients. First, on the subspace of Fock functions vanishing at the origin, the Fock norm is equivalent to a weighted norm of the derivative, enabling a direct replacement thereof. Second, the Cauchy--Riemann equations identify the gradient of $\abs{F}^2$ with the derivative of $F^2$. Finally, two integrations by parts reduce the remaining expression to weighted $L^4$ norms of $F'$ and $F''$; a scale adapted Cauchy estimate controls the latter by the former.

The method is closely related in spirit to Gaussian energy methods. The classical Gaussian logarithmic Sobolev inequality originates in work of Gross \cite{Gross1975}; the complex-analytic viewpoint particularly relevant here appears in Carlen's work on Fisher information and logarithmic Sobolev inequalities \cite{Carlen1991}. 

Finally, the entire argument below has been formalized in Lean~4 and checked by its kernel. Section~\ref{sec:lean} states what was verified, matches it step by step with the proof, and records what the formalization required beyond the text.

\subsection*{LLM Usage} The ideas in this manuscript are completely devised by humans. A good part of their writing was itself done by humans, but several large language models, such as GPT 5.5 and 5.6 Sol, as well as Claude Opus 4.8, 5 and Fable 5 have been used to punctually expand computations, rewrite parts of sentences and proof check the manuscript. 

The main part where LLMs were instrumental was in the Lean formalization part. Through extensive use of Claude Opus 4.8, 5, Fable 5, and GPT 5.5 and 5.6 Sol, the manuscript has been thoroughly verified in Lean. 

In spite of the usage of LLMs, we have checked carefully all results in this manuscript, including (and, as a matter of fact, most carefully of all) the Lean statements. We have significantly changed parts where the LLMs edited the manuscript, and we have rewritten several other parts to more closely match our intuition. We take full responsibility for the contents of this paper. 

\section{Main argument}\label{sec:main-argument}

Throughout the proof, $m$ denotes Lebesgue measure on $\C$. For $k\geq0$, write
\[
 \mathrm{d}\gamma_k(z):=\frac{1}{\pi}\frac{\ee^{-\abs{z}^2}}{(1+\abs{z}^2)^k}\dd m(z),
\]
Thus $\dga=\dgak{0}$. We first prove that, when $H(0)=0$, the Fock norm of $H$ is equivalent to a weighted $L^2$-norm of $H'$.

\begin{lemma}\label{lem:derivative-equivalence}
There are absolute constants $0<c\leq C<\infty$ such that, whenever $H\in\Fock$ and $H(0)=0$,
\begin{equation}
 c\norm{H}_{\Fock}^2
 \leq \int_{\C}\abs{H'(z)}^2\dgak{1}(z)
 \leq C\norm{H}_{\Fock}^2.
 \label{eq:derivative-equivalence}
\end{equation}
\end{lemma}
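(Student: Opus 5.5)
Since the weight $\dgak{1}$ is radial, the monomials are orthogonal in $L^2(\dgak{1})$, and the plan is to reduce \eqref{eq:derivative-equivalence} to a comparison of two sequences of moments. Write $H(z)=\sum_{n\geq1}a_nz^n$, so that $H'(z)=\sum_{n\geq1}na_nz^{n-1}$. By rotational symmetry,
\[
 \int_{\C}\abs{H'}^2\dgak{1}=\sum_{n\geq1}n^2\abs{a_n}^2 I_{n-1},
 \qquad
 I_k:=\frac1\pi\int_{\C}\frac{\abs z^{2k}\ee^{-\abs z^2}}{1+\abs z^2}\dd m(z)=\int_0^\infty\frac{t^k\ee^{-t}}{1+t}\dd t .
\]
On the other side, $\norm{H}_{\Fock}^2=\sum_{n\geq1}\abs{a_n}^2 n!$. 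It therefore suffices to prove
\[
 c\, n!\leq n^2 I_{n-1}\leq C\, n!\qquad\text{for all } n\geq1,
\]
that is, $I_{n-1}\asymp (n-1)!/n$. For an arbitrary $H\in\Fock$ the series manipulations are justified by monotone convergence, since $\{z^k\}$ is orthogonal in $L^2(\dgak{1})$. One can either integrate on discs of radius $R$ and let $R\to\infty$, or apply Parseval on each circle and then integrate radially using Tonelli.

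Upper bound: use $\frac1{1+t}\leq\frac{1}{t}$ for $k\geq1$, which gives $I_k\leq (k-1)!=k!/k$. For $k=0$, simply use $I_0\leq1$. Since $(k+1)/k\leq 2$, this yields $I_{n-1}\leq 2(n-1)!/n$ for all $n\geq 1$.

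Lower bound: use $\frac1{1+t}\geq \frac{1}{1+t}\cdot\frac{t}{t}$ in the form $I_k\geq\int_0^\infty \frac{t^{k}}{1+t}\ee^{-t}\dd t\geq \int_0^\infty \frac{t^{k}}{(1+t)}\ee^{-t}\mathbf 1\dots$. More cleanly, apply Cauchy--Schwarz or Jensen with the probability measure $t^k\ee^{-t}\dd t/k!$. Since $t\mapsto 1/(1+t)$ is convex, we get $I_k\geq k!/(1+\mathbb E t)=k!/(k+2)$. With $k=n-1$ this gives $n^2I_{n-1}\geq n^2(n-1)!/(n+1)\geq n!/2$.

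Combining the two estimates gives \eqref{eq:derivative-equivalence} with $c=1/2$ and $C=2$. No step is a serious obstacle. The only point requiring care is the justification of the orthogonal expansion for general $H\in\Fock$, rather than for polynomials, which the monotone convergence argument above handles.
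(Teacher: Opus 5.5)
Your proof is correct, and its first half matches the paper: you expand $H=\sum_{n\geq1}a_nz^n$, apply Parseval on circles and Tonelli to get $\int_{\C}\abs{H'}^2\dgak{1}=\sum_{n\geq1}n^2\abs{a_n}^2 I_{n-1}$ (your $I_{n-1}$ is the paper's $\kappa_n$), and reduce to $n^2I_{n-1}\asymp n!$.

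The two arguments differ in how this moment is estimated. The paper writes $(1+\abs z^2)^{-1}=\int_0^\infty\ee^{-(1+\abs z^2)s}\dd s$ to get the exact formula $n^2\kappa_n/n!=n\int_0^\infty\ee^{-s}(1+s)^{-n}\dd s$. It then bounds this integral by restricting to $[0,1/n]$, which gives the lower bound $\ee^{-2}$, and by splitting at $s=1$, which gives the upper bound $3$. You instead compare $I_k$ directly with Gamma moments. The pointwise bound $1/(1+t)\leq 1/t$ gives $I_k\leq(k-1)!$. Jensen for the convex function $t\mapsto 1/(1+t)$ under the $\Gamma(k+1,1)$ law, which has mean $k+1$, gives $I_k\geq k!/(k+2)$.

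Your route needs no auxiliary integral representation and gives better constants ($c=1/2$, $C=2$). Your bounds are also asymptotically sharp: $n/(n+1)\leq n^2I_{n-1}/n!\leq n/(n-1)$ for $n\geq2$. The paper's route produces an exact closed form for the ratio, after which everything reduces to elementary estimates of a single explicit one-variable integral.

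One cosmetic point: delete the abandoned fragment beginning ``use $\frac{1}{1+t}\geq\frac{1}{1+t}\cdot\frac{t}{t}$''. It is tautological and trails off. The Jensen argument after ``More cleanly'' is the actual proof.
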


\begin{proof}
Since $H(0)=0$, the Taylor expansion of $H$ has the form
$H(z)=\sum_{n\geq1}a_nz^n$. Note that
\[
 H'(r\ee^{\ii\theta})
 =\sum_{n\geq1}na_nr^{n-1}\ee^{\ii(n-1)\theta}.
\]
Changing to polar coordinates and using the orthogonality of
$\theta\mapsto\ee^{\ii k\theta}$ in the angular integral, we obtain
\begin{align*}
 \int_{\C}\abs{H'}^2\dgak{1}
 &=\frac{1}{\pi}\int_0^\infty\int_0^{2\pi}
   \abs{\sum_{n\geq1}na_nr^{n-1}\ee^{\ii(n-1)\theta}}^2
   \frac{\ee^{-r^2}}{1+r^2}r\dd\theta\dd r\\
 &=2\sum_{n\geq1}n^2\abs{a_n}^2
   \int_0^\infty\frac{r^{2n-1}\ee^{-r^2}}{1+r^2}\dd r\\
 &=\sum_{n\geq1}n^2\abs{a_n}^2\kappa_n,
\end{align*}
where
\[
 \kappa_n:=\int_{\C}\frac{\abs{z}^{2(n-1)}}{1+\abs{z}^2}\dga(z).
\]
Here the derivative series is uniformly convergent on each circle, and Tonelli's
theorem justifies the interchange with the resulting nonnegative radial
series. Observe that it suffices to prove that
\begin{equation}
 \frac{n^2\kappa_n}{n!}\sim1,
 \qquad n\geq1.
 \label{eq:kappa-comparison}
\end{equation}
Indeed, if \eqref{eq:kappa-comparison} holds, then, since
\(\norm{H}_{\Fock}^2=\sum_{n\geq1}\abs{a_n}^2n!\), the identity above can be
compared term by term, and summing over $n$ gives
\[
 \int_{\C}\abs{H'}^2\dgak{1}\sim\norm{H}_{\Fock}^2.
\]

Therefore, we only need to prove \eqref{eq:kappa-comparison}. To this end, using
$(1+\abs{z}^2)^{-1}=\int_0^\infty\ee^{-(1+\abs{z}^2)s}\dd s$ and
\(\int_{\C}\abs{z}^{2j}\dga=j!\), we obtain
\begin{equation}
 \frac{n^2\kappa_n}{n!}
 =n\int_0^\infty\frac{\ee^{-s}}{(1+s)^n}\dd s.
 \label{eq:kn}
\end{equation}
For the lower bound in \eqref{eq:kappa-comparison}, we use \eqref{eq:kn} and
restrict the integral to $0\leq s\leq1/n$.
On this interval, we use that $(1+s)^{-n} \ge e^{-ns}$, and therefore
\[
 n\int_0^\infty\frac{\ee^{-s}}{(1+s)^n}\dd s
 \geq n\int_0^{1/n}\ee^{-2}\dd s
 =\ee^{-2}.
\]
For the upper bound,  split the integral at $s=1$. On $0\leq s\leq1$ we use that
$\log(1+s)\geq s/2$, and on $s\geq1$ we have
$(1+s)^{-n}\leq2^{-n}$. Hence
\[
 \begin{aligned}
 n\int_0^\infty\frac{\ee^{-s}}{(1+s)^n}\dd s
 &\leq n\int_0^1\ee^{-ns/2}\dd s
   +n2^{-n}\int_1^\infty\ee^{-s}\dd s\\
 &\leq2+\frac{1}{2\ee}<3,
 \end{aligned}
\]
finishing the proof.
\end{proof}

We will also use the following elementary weighted Cauchy estimate.

\begin{lemma}\label{lem:weighted-cauchy}
There is an absolute constant $A$ such that every polynomial $F$ satisfies
\begin{align}
 \int_{\C}\abs{F''}^4\dgak{4}
 &\leq A\int_{\C}\abs{F'}^4\dgak{2},
 \label{eq:cauchy-fourth}\\
 \int_{\C}\abs{F'}^2\abs{F''}^2\dgak{3}
 &\leq A\int_{\C}\abs{F'}^4\dgak{2}.
 \label{eq:cauchy-mixed}
\end{align}
\end{lemma}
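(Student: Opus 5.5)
The plan is a scale-adapted Cauchy estimate. Set $G:=F'$, an entire polynomial. For each $z\in\C$ take the radius $\rho(z):=\delta/(1+\abs{z})$, with $\delta\in(0,1]$ a small absolute constant to be fixed. Applying the Cauchy formula for $G'$ on the circle $\abs{w-z}=\rho$, and then the mean value property for the subharmonic function $\abs{G}^4$ averaged over radii in $[\rho,2\rho]$, gives
\[
 \abs{G'(z)}^4\lesssim \rho(z)^{-4}\,\frac{1}{\rho(z)^2}\int_{D(z,2\rho(z))}\abs{G(w)}^4\dd m(w).
\]
Here $\rho^{-4}\sim(1+\abs{z}^2)^2$. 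Together with $(1+\abs{z}^2)^{-4}$ from $\dgak{4}$, this leaves exactly $(1+\abs{z}^2)^{-2}$, which is the weight of $\dgak{2}$. So the power counting is consistent, and \eqref{eq:cauchy-fourth} should follow once the Gaussian factors are compared.

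The comparison step runs as follows. For $w\in D(z,2\rho(z))$ we have
\[
 \bigl|\abs{w}^2-\abs{z}^2\bigr|\leq 2\rho(2\abs{z}+2\rho)\lesssim\delta,
\]
so $\ee^{-\abs{z}^2}\leq C\ee^{-\abs{w}^2}$ and $1+\abs{z}^2\sim 1+\abs{w}^2$ with absolute constants. Integrating the pointwise bound against $\dgak{4}(z)$ and using Tonelli, we get
\[
 \int\abs{G'}^4\dgak{4}\lesssim\int_{\C}\abs{G(w)}^4\ee^{-\abs{w}^2}(1+\abs{w}^2)^{-2}\Bigl(\int_{\abs{z-w}<2\rho(z)}\rho(z)^{-2}\dd m(z)\Bigr)\dd m(w).
\]
The condition $\abs{z-w}<2\rho(z)$ forces $1+\abs{z}\sim1+\abs{w}$, hence $\rho(z)\sim\rho(w)$. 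The inner integral is therefore $O(1)$, since it is the area of a disc of radius $\sim\rho(w)$ divided by $\rho(w)^2$. This proves \eqref{eq:cauchy-fourth}. Polynomials make every integral finite and the interchanges trivially justified.

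For \eqref{eq:cauchy-mixed}, I would apply Cauchy--Schwarz with weights split as $(1+\abs{z}^2)^{-3}=(1+\abs{z}^2)^{-1}(1+\abs{z}^2)^{-2}$:
\[
 \int\abs{F'}^2\abs{F''}^2\dgak{3}\leq\Bigl(\int\abs{F'}^4\dgak{2}\Bigr)^{1/2}\Bigl(\int\abs{F''}^4\dgak{4}\Bigr)^{1/2}.
\]
This reduces \eqref{eq:cauchy-mixed} to \eqref{eq:cauchy-fourth}, possibly with $A$ replaced by $\sqrt A$, and the constant can be enlarged to cover both inequalities.

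The main obstacle is the comparison of the Gaussian weights across the small disc. The choice $\rho\sim1/(1+\abs{z})$ is exactly what makes $\ee^{-\abs{w}^2}/\ee^{-\abs{z}^2}$ bounded. A fixed radius would fail, because $\abs{w}^2-\abs{z}^2$ would then grow like $\abs{z}$. The remaining work is the bookkeeping of the equivalences $1+\abs{z}\sim1+\abs{w}$ when $\delta$ is small, which is where I would be careful about constants.
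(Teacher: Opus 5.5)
Your proof is correct, and its core is the same as the paper's. Both use a Cauchy estimate for $F''$ in terms of $F'$ on circles of radius comparable to $(1+\abs z)^{-1}$, which keeps $\abs w^2-\abs z^2$ bounded and makes the Gaussian weights comparable. Both then obtain \eqref{eq:cauchy-mixed} by the same Cauchy--Schwarz splitting $(1+\abs z^2)^{-3}=(1+\abs z^2)^{-1}(1+\abs z^2)^{-2}$.

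The difference is in how the pointwise bound is integrated back against $\dgak{2}$. The paper keeps a circle average and splits the plane into two regions. On $\{\abs z\leq2\}$ it uses the fixed radius $r=1$ and a translation. On $\{\abs z>2\}$ it takes $r_z=\abs z^{-1}$ and changes variables through $\Phi_\theta(z)=z+\ee^{\ii\theta}/\abs z$ for each $\theta$. This requires proving that $\Phi_\theta$ is injective there and has Jacobian at least $3/4$.

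You instead pass to a solid-disc average, $\abs{G'(z)}^4\lesssim\rho(z)^{-6}\int_{D(z,2\rho(z))}\abs G^4\dd m$, and exchange the order of integration by Tonelli. The only geometric input you then need is the overlap bound $\int_{\{z:\abs{z-w}<2\rho(z)\}}\rho(z)^{-2}\dd m(z)=O(1)$, which follows from $\rho(z)\sim\rho(w)$ on that set.

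What your route buys is simplicity and robustness. A single radius function covers the whole plane, and no injectivity or Jacobian computation is needed. It would also work for any radius function that is comparable across the discs involved. You also do not need $\delta$ small: with $\delta=1$ one already has $\bigl|\abs w^2-\abs z^2\bigr|\leq 4\rho(z)(\abs z+\rho(z))\leq4$ on $D(z,2\rho(z))$.

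What the paper's route buys is a slightly more elementary pointwise inequality, namely H\"older on a single circle with no radial averaging. Each region is then handled by a concrete substitution. The price is the diffeomorphism analysis of $\Phi_\theta$ and the two-region split.
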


\begin{proof}
We first prove \eqref{eq:cauchy-fourth}. The Cauchy integral formula applied to
$F'$ gives
\[
 \abs{F''(z)}
 \leq\frac{1}{2\pi r}\int_0^{2\pi}
   \abs{F'(z+r\ee^{\ii\theta})}\dd\theta,
\]
and H\"older's inequality yields, for every $r>0$,
\begin{equation}
 \abs{F''(z)}^4
 \leq\frac{1}{2\pi r^4}\int_0^{2\pi}
 \abs{F'(z+r\ee^{\ii\theta})}^4\dd\theta.
 \label{eq:pointwise-cauchy}
\end{equation}

To compute the integral in \eqref{eq:cauchy-fourth}, we split the plane into
the regions $\{z:\abs z\leq2\}$ and $\{z:\abs z>2\}$.

Note that if $\abs z\leq2$, then
\[
 \frac{\ee^{-\abs z^2}}{(1+\abs z^2)^4}
 \leq C_1
   \frac{\ee^{-\abs{z+\ee^{\ii\theta}}^2}}
        {(1+\abs{z+\ee^{\ii\theta}}^2)^2}.
\]
Therefore, using \eqref{eq:pointwise-cauchy} with
$r=1$, Tonelli's theorem, and translation by $\ee^{\ii\theta}$, we obtain
\begin{align*}
 \int_{\{\abs z\leq2\}}\abs{F''(z)}^4\dgak{4}(z)
 &\leq\frac{1}{2\pi^2}\int_0^{2\pi}\int_{\{\abs z\leq2\}}
   \abs{F'(z+\ee^{\ii\theta})}^4
   \frac{\ee^{-\abs z^2}}{(1+\abs z^2)^4}
   \dd m(z)\dd\theta\\
 &\leq\frac{C_1}{2\pi^2}
   \int_0^{2\pi}\int_{\{\abs{\zeta-\ee^{\ii\theta}}\leq2\}}
   \abs{F'(\zeta)}^4
   \frac{\ee^{-\abs\zeta^2}}{(1+\abs\zeta^2)^2}
   \dd m(\zeta)\dd\theta\\
 &\leq C_1
   \int_{\C}\abs{F'}^4\dgak{2}.
\end{align*}

We now consider $\{z:\abs z>2\}$. Here a fixed radius would not give a
uniform comparison of the Gaussian weights, so we choose
$r_z=\abs z^{-1}$. For fixed $\theta$, set
\[
 \Phi_\theta(z):=z+\frac{\ee^{\ii\theta}}{\abs z}.
\]
Then \eqref{eq:pointwise-cauchy} becomes
\[
 \abs{F''(z)}^4
 \leq\frac{\abs z^4}{2\pi}\int_0^{2\pi}
   \abs{F'(\Phi_\theta(z))}^4\dd\theta.
\]

Before integrating, we compare the weight at $z$ with the weight at
$\Phi_\theta(z)$. The choice $r_z=\abs z^{-1}$ ensures that the difference
between $\abs{\Phi_\theta(z)}^2$ and $\abs z^2$ is uniformly bounded.
In particular,
\[
 \abs{\Phi_\theta(z)}^2-\abs z^2\leq\frac94,
 \qquad
 1+\abs{\Phi_\theta(z)}^2\leq2(1+\abs z^2),
\]
and therefore
\[
 \frac{\abs z^4\ee^{-\abs z^2}}{(1+\abs z^2)^4}
 \leq4\ee^{9/4}
   \frac{\ee^{-\abs{\Phi_\theta(z)}^2}}
        {(1+\abs{\Phi_\theta(z)}^2)^2}.
\]

Multiplying the pointwise estimate by the density of $\dgak{4}$, integrating,
and using the inequality above, we obtain
\begin{align*}
 \int_{\{\abs z>2\}}\abs{F''(z)}^4\dgak{4}(z)
 &\leq\frac{1}{2\pi^2}\int_0^{2\pi}\int_{\{\abs z>2\}}
   \abs{F'(\Phi_\theta(z))}^4
   \frac{\abs z^4\ee^{-\abs z^2}}{(1+\abs z^2)^4}
   \dd m(z)\dd\theta\\
 &\leq\frac{2\ee^{9/4}}{\pi^2}
   \int_0^{2\pi}\int_{\{\abs z>2\}}
   \abs{F'(\Phi_\theta(z))}^4
   \frac{\ee^{-\abs{\Phi_\theta(z)}^2}}
        {(1+\abs{\Phi_\theta(z)}^2)^2}
   \dd m(z)\dd\theta.
\end{align*}

To apply the change-of-variables formula to $\Phi_\theta$, we check that this
map is one-to-one and that its Jacobian determinant is bounded away from zero. The real differential of $\Phi_\theta$ is
\[
 D\Phi_\theta(z)[h]
 =h-\frac{\ee^{\ii\theta}}{\abs z^3}
   \operatorname{Re}(\overline zh),
\]
and hence
\[
 \det D\Phi_\theta(z)
 =1-\frac{\operatorname{Re}(\ee^{-\ii\theta}z)}{\abs z^3}
 \geq1-\frac{1}{\abs z^2}
 \geq\frac34.
\]
Moreover, for $\abs z>2$ and $\abs{z'}>2$,
\begin{align*}
 \abs{\Phi_\theta(z)-\Phi_\theta(z')}
 &\geq\abs{z-z'}
   -\left\lvert\frac{1}{\abs z}-\frac{1}{\abs{z'}}\right\rvert\\
 &\geq\frac34\abs{z-z'}.
\end{align*}
Thus $\Phi_\theta$ is one-to-one on $\{z:\abs z>2\}$. Since
$\det D\Phi_\theta(z)\geq3/4$, the change-of-variables formula gives
\begin{align*}
 \int_{\{\abs z>2\}}\abs{F'(\Phi_\theta(z))}^4
   \frac{\ee^{-\abs{\Phi_\theta(z)}^2}}
        {(1+\abs{\Phi_\theta(z)}^2)^2}\dd m(z)
 &\leq\frac43\int_{\C}
   \abs{F'(\zeta)}^4
   \frac{\ee^{-\abs\zeta^2}}{(1+\abs\zeta^2)^2}\dd m(\zeta)\\
 &=\frac{4\pi}{3}
   \int_{\C}\abs{F'}^4\dgak{2}.
\end{align*}
Consequently,
\[
 \int_{\{\abs z>2\}}\abs{F''(z)}^4\dgak{4}(z)
 \leq C_2
   \int_{\C}\abs{F'}^4\dgak{2},
\]
and putting the two estimates together, we get
\[
 \int_{\C}\abs{F''}^4\dgak{4}
 \leq A_4\int_{\C}\abs{F'}^4\dgak{2},
 \qquad
 A_4:=C_1+C_2,
\]
where $A_4$ is an absolute constant. This proves
\eqref{eq:cauchy-fourth}.

To prove \eqref{eq:cauchy-mixed}, we just apply Cauchy-Schwarz and get
\begin{align*}
 \int_{\C}\abs{F'}^2\abs{F''}^2\dgak{3}
 &=\frac{1}{\pi}\int_{\C}
   \left(\abs{F'}^2\frac{\ee^{-\abs z^2/2}}{1+\abs z^2}\right)
   \left(\abs{F''}^2\frac{\ee^{-\abs z^2/2}}{(1+\abs z^2)^2}\right)
   \dd m(z)\\
 &\leq
 \left(\int_{\C}\abs{F'}^4\dgak{2}\right)^{1/2}
 \left(\int_{\C}\abs{F''}^4\dgak{4}\right)^{1/2}\\
 &\leq A_4^{1/2}\int_{\C}\abs{F'}^4\dgak{2}.
\end{align*}
Taking $A:=\max\{A_4,A_4^{1/2}\}$ proves both estimates.
\end{proof}

We are now ready to prove Theorem~\ref{thm:main}.

\begin{proof}[Proof of Theorem~\ref{thm:main}]
Let $F$ be a polynomial and $c \in \mathbb{R}$. The
Cauchy--Riemann equations give
\[
 \partial_x\abs F^2=2\operatorname{Re}(F'\overline F),
 \qquad
 \partial_y\abs F^2=-2\operatorname{Im}(F'\overline F),
\]
and consequently, we have the pointwise identity
\begin{equation}
 \abs{\nabla(\abs F^2)}^2=4\abs F^2\abs{F'}^2=\abs{(F^2)'}^2.
 \label{eq:cr-identities}
\end{equation}
Since $F^2-F(0)^2$ vanishes at the origin, applying
Lemma~\ref{lem:derivative-equivalence} to $F^2-F(0)^2$ and then using
\eqref{eq:cr-identities} gives
\[
 \norm{F^2-F(0)^2}_{\Fock}^2
 \leq C\int_{\C}\abs{\nabla(\abs F^2)}^2\dgak{1}.
\]
To bound the integral above, fix $c\in\R$ and let
\[
 w_1(z):=\frac{1}{\pi}\frac{\ee^{-\abs z^2}}{1+\abs z^2}.
\]
Subtracting $c$ does not change the gradient, so
\[
 \int_{\C}\abs{\nabla(\abs F^2)}^2w_1\dd m
 =\int_{\C}\nabla(\abs F^2-c)\cdot\nabla(\abs F^2)w_1\dd m.
\]
Integrating by parts once gives
\begin{align*}
 \int_{\C}\abs{\nabla(\abs F^2)}^2w_1\dd m
 &=-\int_{\C}(\abs F^2-c)\,\Delta(\abs F^2)\,w_1\dd m\\
 &\quad-\int_{\C}(\abs F^2-c)\,
   \nabla(\abs F^2)\cdot\nabla w_1\dd m.
\end{align*}
To treat the second integral, first note that
\[
 (\abs F^2-c)\nabla(\abs F^2)
 =\frac12\nabla\left((\abs F^2-c)^2\right).
\]
We use this identity to rewrite the integral as
\[
 -\int_{\C}(\abs F^2-c)\,\nabla(\abs F^2)\cdot\nabla w_1\dd m
 =-\frac12\int_{\C}\nabla\left((\abs F^2-c)^2\right)
   \cdot\nabla w_1\dd m.
\]
We now integrate by parts to obtain
\[
 -\frac12\int_{\C}\nabla\left((\abs F^2-c)^2\right)
   \cdot\nabla w_1\dd m
 =\frac12\int_{\C}(\abs F^2-c)^2\Delta w_1\dd m.
\]
Combining the two identities, we obtain
\begin{equation}
 \int_{\C}\abs{\nabla(\abs F^2)}^2w_1\dd m
 =-\int_{\C}(\abs F^2-c)\,\Delta(\abs F^2)\,w_1\dd m
 +\frac12\int_{\C}(\abs F^2-c)^2\Delta w_1\dd m.
 \label{eq:first-ibp}
\end{equation}
The boundary terms vanish because $F$ is a polynomial and $w_1$ has Gaussian
decay.

The explicit form of $w_1$ gives
\[
 \abs{\Delta w_1(z)}\leq C\frac{\ee^{-\abs z^2}}{\pi},
\]
and therefore the last term in \eqref{eq:first-ibp} is bounded by
$C\norm{\abs F^2-c}_{\Fock}^2$.

For the first term on the right-hand side of \eqref{eq:first-ibp},
Cauchy--Schwarz yields
\[
 \left|\int_{\C}(\abs F^2-c)\,\Delta(\abs F^2)\,w_1\dd m\right|
 \leq \norm{\abs F^2-c}_{\Fock}J^{1/2},
 \qquad \text { where }
 J:=\int_{\C}\bigl(\Delta(\abs F^2)\bigr)^2\dgak{2}.
\]
Therefore, it remains to show that
$J\lesssim\norm{\abs F^2-c}_{\Fock}^2$ to conclude the proof.  Put
\[
 w_2(z):=\frac{1}{\pi}\frac{\ee^{-\abs z^2}}{(1+\abs z^2)^2},
\]
and observe that, since $\Delta(\abs F^2-c)=\Delta(\abs F^2)$, two further integrations by
parts give
\begin{align}
 J
 &=\int_{\C}(\abs F^2-c)\,
   \Delta\bigl(\Delta(\abs F^2)w_2\bigr)\dd m \notag\\
 &=\int_{\C}(\abs F^2-c)\,\Delta^2(\abs F^2)\,w_2\dd m \notag\\
 &\quad+2\int_{\C}(\abs F^2-c)\,
   \nabla\bigl(\Delta(\abs F^2)\bigr)\cdot\nabla w_2\dd m \notag\\
 &\quad+\int_{\C}(\abs F^2-c)\,\Delta(\abs F^2)\,\Delta w_2\dd m.
 \label{eq:J-expansion}
\end{align}
Note that we can bound the derivatives of $w_2$ by
\[
 \abs{\nabla w_2(z)}\leq C\frac{\ee^{-\abs z^2}}{\pi(1+\abs z^2)^{3/2}},
 \qquad
 \abs{\Delta w_2(z)}\leq C\frac{\ee^{-\abs z^2}}{\pi(1+\abs z^2)}.
\]
For any holomorphic function $G$, the Cauchy--Riemann equations give
$\Delta(\abs G^2)=4\abs{G'}^2$. Applying this identity first to $G=F$ and
then to $G=F'$ gives
\begin{equation}
 \Delta(\abs F^2)=4\abs{F'}^2,
 \qquad
 \Delta^2(\abs F^2)=4\Delta(\abs{F'}^2)=16\abs{F''}^2.
 \label{eq:laplacian-identities}
\end{equation}
Moreover, applying the pointwise identity \eqref{eq:cr-identities} with $F'$
in place of $F$ gives
\[
 \abs{\nabla\bigl(\Delta(\abs F^2)\bigr)}
 =4\abs{\nabla\abs{F'}^2}
 =8\abs{F'}\abs{F''}.
\]
The first identity in \eqref{eq:laplacian-identities} also shows that
\begin{equation}
 J=16\int_{\C}\abs{F'}^4\dgak{2}.
 \label{eq:j-f-prime}
\end{equation}

We now estimate separately the three integrals on the right-hand side of
\eqref{eq:J-expansion}. For the first integral, we use the second identity in
\eqref{eq:laplacian-identities}. Since $w_2\dd m=\dgak{2}$ and the density of
$\dgak{2}$ is the geometric mean of the densities of $\dga$ and $\dgak{4}$,
Cauchy--Schwarz gives
\begin{align*}
 \left|\int (\abs F^2-c)\,\Delta^2(\abs F^2)\,w_2\dd m\right|
 &\leq C\norm{\abs F^2-c}_{\Fock}
   \left(\int_{\C}\abs{F''}^4\dgak{4}\right)^{1/2}.
\end{align*}

For the second integral, we use the identity
$\abs{\nabla(\Delta(\abs F^2))}=8\abs{F'}\abs{F''}$ together with the bound
for $\abs{\nabla w_2}$. The density that occurs is the geometric mean of the
densities of $\dga$ and $\dgak{3}$, so Cauchy--Schwarz gives
\begin{align*}
 \left|2\int (\abs F^2-c)\,
   \nabla\bigl(\Delta(\abs F^2)\bigr)\cdot\nabla w_2\dd m\right|
 &\leq C\int_{\C}\abs{\abs F^2-c}\,\abs{F'}\abs{F''}
   \frac{\ee^{-\abs z^2}}{\pi(1+\abs z^2)^{3/2}}\dd m(z)\\
 &\leq C\norm{\abs F^2-c}_{\Fock}
   \left(\int_{\C}\abs{F'}^2\abs{F''}^2\dgak{3}\right)^{1/2}.
\end{align*}

For the third integral, we use the first identity in
\eqref{eq:laplacian-identities} and the bound for $\abs{\Delta w_2}$. This
time the density is the geometric mean of the densities of $\dga$ and
$\dgak{2}$, and hence
\begin{align*}
 \left|\int (\abs F^2-c)\,\Delta(\abs F^2)\,\Delta w_2\dd m\right|
 &\leq C\int_{\C}\abs{\abs F^2-c}\,\abs{F'}^2
   \frac{\ee^{-\abs z^2}}{\pi(1+\abs z^2)}\dd m(z)\\
 &\leq C\norm{\abs F^2-c}_{\Fock}
   \left(\int_{\C}\abs{F'}^4\dgak{2}\right)^{1/2}.
\end{align*}

By \eqref{eq:cauchy-fourth} and \eqref{eq:cauchy-mixed}, the last factors in
the first two estimates are also bounded by a constant multiple of
$\bigl(\int_{\C}\abs{F'}^4\dgak{2}\bigr)^{1/2}$. Summing the three estimates
in \eqref{eq:J-expansion}, we obtain
\[
 J\leq C\norm{\abs F^2-c}_{\Fock}
   \left(\int_{\C}\abs{F'}^4\dgak{2}\right)^{1/2}.
\]
Finally, \eqref{eq:j-f-prime} gives
$\bigl(\int_{\C}\abs{F'}^4\dgak{2}\bigr)^{1/2}=J^{1/2}/4$. Therefore,
\[
 J\leq C\norm{\abs F^2-c}_{\Fock}J^{1/2}.
\]
If $J=0$ there is nothing to prove; otherwise division by $J^{1/2}$ gives
\[
 J\leq C\norm{\abs F^2-c}_{\Fock}^2.
\]
Returning to \eqref{eq:first-ibp} and using the estimates for its two terms,
we obtain
\begin{align*}
 \int_{\C}\abs{\nabla(\abs F^2)}^2\dgak{1}
 &\leq C\norm{\abs F^2-c}_{\Fock}^2
   +\norm{\abs F^2-c}_{\Fock}J^{1/2}\\
 &\leq C\norm{\abs F^2-c}_{\Fock}^2.
\end{align*}
Combining this estimate with the initial application of
Lemma~\ref{lem:derivative-equivalence} gives
\[
 \norm{F^2-F(0)^2}_{\Fock}^2
 \leq C\norm{\abs F^2-c}_{\Fock}^2,
\]
for every polynomial $F$. Taking square roots and then the infimum over
$c\in\R$ proves the polynomial case of Theorem~\ref{thm:main}.

We finish the extension to the stated entire-function class. Write
$F(z)=\sum_{n\geq0}a_nz^n$ and $F^2(z)=\sum_{n\geq0}b_nz^n$. For $0\leq r<1$, set
\[
 F_r(z):=F(rz),
 \qquad P_{r,N}(z):=\sum_{n<N}a_nr^nz^n.
\]
Since
\[
 F_r^2(z)=F^2(rz)=\sum_{n\geq0}b_nr^nz^n,
\]
the coefficient formula for the Fock norm gives
\[
 \norm{F_r^2-F^2}_{\Fock}^2
 =\sum_{n\geq0}\abs{b_n}^2(1-r^n)^2n!.
\]
Since
\(\sum_{n\geq0}\abs{b_n}^2n!=\norm{F^2}_{\Fock}^2<\infty\) and
$(1-r^n)^2\leq1$, dominated convergence shows that this expression tends to
zero as $r\uparrow1$.

We next fix $0<r<1$ and prove that $P_{r,N}\to F_r$ in $L^4(\dga)$. For each
circle of radius $\rho\geq0$, Parseval's identity and Cauchy--Schwarz with
respect to the normalized angular measure give
\begin{align*}
 \left(\sum_{n\geq0}\abs{a_n}^2\rho^{2n}\right)^2
 &=\left(\frac{1}{2\pi}\int_0^{2\pi}
   \abs{F(\rho\ee^{\ii\theta})}^2\dd\theta\right)^2\\
 &\leq\frac{1}{2\pi}\int_0^{2\pi}
   \abs{F(\rho\ee^{\ii\theta})}^4\dd\theta\\
 &=\sum_{n\geq0}\abs{b_n}^2\rho^{2n}.
\end{align*}
Therefore, applying Cauchy--Schwarz to the Taylor tail, we get
\begin{align*}
 \abs{P_{r,N}(z)-F_r(z)}^4
 &\leq\left(\sum_{n\geq N}r^{2n}\right)^2
   \left(\sum_{n\geq N}\abs{a_n}^2\abs z^{2n}\right)^2\\
 &\leq\left(\sum_{n\geq N}r^{2n}\right)^2
   \sum_{n\geq0}\abs{b_n}^2\abs z^{2n}.
\end{align*}
Integrating this inequality with respect to $\dga$ and using
$\int_{\C}\abs z^{2n}\dga(z)=n!$, we obtain
\[
 \norm{P_{r,N}-F_r}_{L^4(\dga)}^4
 \leq\left(\sum_{n\geq N}r^{2n}\right)^2
   \sum_{n\geq0}\abs{b_n}^2n!
 =\left(\sum_{n\geq N}r^{2n}\right)^2\norm{F^2}_{\Fock}^2.
\]
The geometric tail tends to zero as $N\to\infty$, proving that
$P_{r,N}\to F_r$ in $L^4(\dga)$. Finally, the factorization
\[
 P_{r,N}^2-F_r^2=(P_{r,N}-F_r)(P_{r,N}+F_r)
\]
and Cauchy--Schwarz give
\[
 \norm{P_{r,N}^2-F_r^2}_{\Fock}
 \leq\norm{P_{r,N}-F_r}_{L^4(\dga)}
      \norm{P_{r,N}+F_r}_{L^4(\dga)}\longrightarrow0.
\]
Here the second factor remains bounded because $P_{r,N}\to F_r$ in
$L^4(\dga)$.

Now take $r_k=k/(k+1)$ and choose $N_k\geq1$ so that
\[
 \norm{P_{r_k,N_k}^2-F_{r_k}^2}_{\Fock}<\frac1k.
\]
Since $F_{r_k}^2\to F^2$ in $\Fock$, the triangle inequality gives
$P_{r_k,N_k}^2\to F^2$ in $\Fock$. Moreover,
$P_{r_k,N_k}(0)=F(0)$ for every $k$. For any fixed $c\in\R$, the polynomial
estimate therefore gives
\[
 \norm{P_{r_k,N_k}^2-F(0)^2}_{\Fock}
 \leq C\norm{\abs{P_{r_k,N_k}}^2-c}_{\Fock}.
\]
The pointwise inequality
$\bigl\lvert\abs{P_{r_k,N_k}}^2-\abs F^2\bigr\rvert
\leq\abs{P_{r_k,N_k}^2-F^2}$ shows that we may pass to the limit and obtain
\[
 \norm{F^2-F(0)^2}_{\Fock}
 \leq C\norm{\abs F^2-c}_{\Fock}.
\]
Taking the infimum over $c\in\R$ proves the theorem.
\end{proof}

\subsection{Proof of the square-restricted stability statement}

\begin{proof}[Proof of Corollary~\ref{cor:square-stability}]
Theorem~\ref{thm:main} with $c=1$ gives
\[
 \norm{F^2-F(0)^2}_{\Fock}
 \leq C\norm{\abs F^2-1}_{\Fock}.
\]
Since
$\abs{\abs{F}^2-\abs{F(0)}^2}\leq\abs{F^2-F(0)^2}$ pointwise and
$\dga$ is a probability measure,
\[
 \bigl|\abs{F(0)}^2-1\bigr|
 \leq \norm{\abs{F}^2-\abs{F(0)}^2}_{\Fock}
   +\norm{\abs F^2-1}_{\Fock}
 \leq (C+1)\norm{\abs F^2-1}_{\Fock}.
\]
If $F(0)\neq0$, choose
$\lambda=F(0)^2/\abs{F(0)}^2$; if $F(0)=0$, choose any unimodular
$\lambda$. In both cases,
\[
 \abs{F(0)^2-\lambda}=\bigl|\abs{F(0)}^2-1\bigr|.
\]
Consequently,
\[
 \norm{F^2-\lambda}_{\Fock}
 \leq \norm{F^2-F(0)^2}_{\Fock}+\abs{F(0)^2-\lambda}
 \leq (2C+1)\norm{\abs F^2-1}_{\Fock},
\]
which proves Corollary~\ref{cor:square-stability}.
\end{proof}

\section{Comments}\label{sec:comments}

\subsection{Comparison to \cite{Bertolini2026}}

The estimate in Corollary~\ref{cor:square-stability} has the same natural Fock-space topology and the same global form as \eqref{eq:full-result-intro}. Its scope is smaller: the competitor is required to be a square. In particular, the argument does not prove stability for an arbitrary $G\in\Fock$, and it does not address the higher Hermite windows or the true polyanalytic Fock spaces considered in \cite{Bertolini2026}.

The gain is conceptual and cosmetic, but useful. The proof is entirely complex analytic after the Bargmann--Fock reduction. It uses no circle estimate for trigonometric polynomials, no decomposition into frequency blocks, and no annular mass-concentration argument. The price of this simplification is precisely the identity
\[
 \abs{\nabla\abs F^2}^2=\abs{(F^2)' }^2,
\]
which has no counterpart for a general competitor and is not stable under the passage to polyanalytic functions.

The two proof architectures can therefore be summarized as follows. The full theorem first removes the phase direction by an abstract orthogonal reduction and then exploits polar Fourier structure. The present argument removes the constant mode by the weighted derivative equivalence, converts the modulus defect into a gradient energy, and closes the estimate by Cauchy integral bounds. The latter route is shorter, while the former route has the substantially broader conclusion.

\subsection{The Gaussian energy viewpoint and Carlen's work}

There is a natural connection with the Gaussian energy methods appearing in Carlen's work on Fisher information and logarithmic Sobolev inequalities \cite{Carlen1991}. Both arguments exploit the Gaussian measure, an integration-by-parts identity, and the fact that complex analyticity turns derivatives of a modulus into holomorphic derivatives. In the present proof, this mechanism appears in the pair of identities
\[
 \Delta\abs F^2=4\abs{F'}^2,
 \qquad
 \abs{\nabla\abs F^2}^2=\abs{(F^2)' }^2.
\]
They play the role of a particularly rigid carré-du-champ structure for the density $\abs F^2$.

In spite of the similarities, Carlen's arguments are designed around entropy, Fisher information, and sharp Gaussian inequalities, whereas the present argument is a coercivity estimate for the nonlinear squaring map. The auxiliary factor $(1+\abs z^2)^{-1}$ is inserted to make differentiation coercive away from the constant mode, and the weighted Cauchy lemma is the genuinely complex-analytic step that closes the estimate.

\section{The Lean verification}\label{sec:lean}

Every result of this note has been formalized in Lean~4 and checked by its kernel against Mathlib: Lemma~\ref{lem:derivative-equivalence}, Lemma~\ref{lem:weighted-cauchy}, Theorem~\ref{thm:main} in both its polynomial and its entire-function form, and Corollary~\ref{cor:square-stability}. The library builds with Lean \lean{v4.30.0} against Mathlib \lean{v4.30.0}; it contains no \lean{sorry}, no \lean{admit} and no project axiom, and \lean{\#print axioms} on each of the declarations listed below reports only \lean{propext}, \lean{Classical.choice} and \lean{Quot.sound}. The trusted base is therefore Lean's kernel together with the pinned Mathlib; nothing in this note is verified relative to an unproved assumption. All Lean statements reproduced or cited in this section were also read and checked by a human against the corresponding mathematical statements in the note. The code is available at \url{https://github.com/cynthiabort/SPR-near-Gaussian}.

\subsection{The verified statement}

A formalization is only as informative as the statement it proves, so we reproduce the definitions and the two theorems that carry the mathematical content. At the level of statements these depend on Mathlib alone, and reading them is enough to check that what was verified is what was claimed.

The measure is the normalized Gaussian $\dga$ of the introduction, realized as a density against planar Lebesgue measure:
\begin{leancode}
def gaussianDensity (z : ℂ) : ℝ≥0∞ :=
  ENNReal.ofReal (Real.exp (-‖z‖ ^ 2) / Real.pi)

def gaussianMeasure : Measure ℂ :=
  volume.withDensity gaussianDensity
\end{leancode}
The Fock norm is not introduced as a norm on an abstract space of entire functions, but rather, in this context it is interpreted as the Gaussian $L^2$ norm, so that the coefficient identity $\norm{H}_{\Fock}^2=\sum_n\abs{a_n}^2n!$ used throughout Section~\ref{sec:main-argument} is a proved theorem (\lean{fock\_norm\_sq\_eq\_coeff\_sum}) and not a definition.
\begin{leancode}
def functionFockNorm (f : ℂ → ℂ) : ℝ :=
  lpNorm f 2 gaussianMeasure

def functionCenteredSquare (F : ℂ → ℂ) : ℂ → ℂ :=
  fun z ↦ F z ^ 2 - F 0 ^ 2

def functionDistanceToConstant (F : ℂ → ℂ) (c : ℝ) : ℝ :=
  lpNorm (fun z ↦ Complex.normSq (F z) - c) 2 gaussianMeasure

def functionDistanceToUnimodularConstants (G : ℂ → ℂ) : ℝ :=
  sInf \lbr{}d : ℝ | ∃ phase : ℂ, ‖phase‖ = 1 ∧
    d = functionFockNorm (fun z ↦ G z - phase)\rbr{}
\end{leancode}
Here \lean{Complex.normSq (F z)} is $\abs{F(z)}^2$ and \lean{lpNorm f 2 gaussianMeasure} is $\norm{f}_{L^2(\dga)}$, so \lean{functionDistanceToConstant F c} is $\norm{\abs F^2-c}_{\Fock}$. The last definition is the infimum appearing in \eqref{eq:square-stability}, taken literally as the infimum of a set of real numbers rather than replaced by a chosen minimizing phase.

Theorem~\ref{thm:main} for entire $F$ and Corollary~\ref{cor:square-stability} are then the following two statements.
\begin{leancode}
theorem square_coercivity_for_entire :
    ∃ C : ℝ, 0 < C ∧ ∀ (F : ℂ → ℂ), Differentiable ℂ F →
      MemLp (fun z ↦ F z ^ 2) 2 gaussianMeasure →
      functionFockNorm (functionCenteredSquare F) ≤
        C * (⨅ c : ℝ, functionDistanceToConstant F c)

theorem stable_phase_retrieval_for_squares :
    ∃ C_square : ℝ, 0 < C_square ∧
      ∀ (F : ℂ → ℂ), Differentiable ℂ F →
        MemLp (fun z ↦ F z ^ 2) 2 gaussianMeasure →
        functionDistanceToUnimodularConstants (fun z ↦ F z ^ 2) ≤
          C_square * functionDistanceToConstant F 1
\end{leancode}
In these statements \lean{Differentiable ℂ F} says that $F$ is entire, \lean{MemLp (fun z ↦ F z \^{} 2) 2 gaussianMeasure} says that $F^2\in\Fock$, and \lean{⨅ c : ℝ} is the infimum over all real $c$. The constant is quantified before $F$, hence absolute, and neither statement carries any hypothesis beyond those of Theorem~\ref{thm:main} and Corollary~\ref{cor:square-stability}.

\subsection{Correspondence with the proof}

The table below records the statements from Section~\ref{sec:main-argument} whose correspondence
with one or more Lean declarations was checked by a human. For every entry, the hypotheses,
conclusion, constants, and cited Lean declaration were compared directly with the mathematical
statement in the note.

\begin{table}[htbp]
\centering\small
\begin{tabular}{@{}p{0.34\textwidth}p{0.60\textwidth}@{}}
\hline
Result in this note & Lean declaration \\
\hline
$\int_{\C}\abs z^{2k}\dga=k!$ & \lean{complex\_gaussian\_monomial\_moment} \\
$\norm{H}_{\Fock}^2=\sum_n\abs{a_n}^2n!$ & \lean{fock\_norm\_sq\_eq\_coeff\_sum} \\
\eqref{eq:kappa-comparison}, \eqref{eq:kn} & \lean{weighted\_derivative\_coefficient}\newline\lean{weighted\_derivative\_coefficient\_bounds} \\
Lemma~\ref{lem:derivative-equivalence} & \lean{entire\_weighted\_derivative\_equivalence\_integral} \\
\eqref{eq:pointwise-cauchy} & \lean{pointwise\_cauchy\_bound} \\
weight comparison under the two shifts & \lean{inner\_weight\_shift}, \lean{outer\_weight\_shift} \\
injectivity/diffeomorphism of $\Phi_\theta$ & \lean{outer\_shift\_diffeomorph} \\
Lemma~\ref{lem:weighted-cauchy} & \lean{weighted\_cauchy} \\
gradient identity in \eqref{eq:cr-identities} and first identity in \eqref{eq:laplacian-identities} & \lean{gradient\_normSq\_polynomial}\newline\lean{laplacian\_normSq\_polynomial} \\
\eqref{eq:first-ibp} & \lean{weighted\_integration\_by\_parts} \\
bounds on $\Delta w_1$, on $\nabla w_2$ and $\Delta w_2$ & \lean{weight\_laplacian\_bound}\newline\lean{weight\_two\_derivative\_bounds} \\
$J\lesssim\norm{\abs F^2-c}_{\Fock}^2$ & \lean{laplacian\_energy\_bound} \\
Theorem~\ref{thm:main}, polynomial case & \lean{squaring\_against\_every\_constant}, \lean{squaring} \\
construction of $P_{r,N}$ & \lean{entire\_fock\_square\_polynomial\_approximation} \\
Theorem~\ref{thm:main}, entire case & \lean{square\_coercivity\_for\_entire} \\
Corollary~\ref{cor:square-stability} & \lean{stable\_phase\_retrieval\_for\_squares} \\
\hline
\end{tabular}
\end{table}

\section*{Acknowledgements}

J. P. G. R. ~was supported by the FCT through project SHADE (project 2023.17881.ICDT, DOI  10.54499 / 2023.17881.ICDT), by FAPERJ through the JCNE grant no.~SEI-260003 / 020475/2025, and by Instituto Serrapilheira through grant Serra-R-2510-59765. 

We would furthermore like to express our gratitude towards Jaume de Dios Pont, Mitchell Taylor, André Guerra, and Francisco Ganacim for several discussions on the strategy to prove the theorem, the lean formalization and the overall structure of the manuscript.

\end{document}